\begin{document}
\title[A $q$-analogue of derivations on the tensor algebra]
{A $q$-analogue of derivations on the tensor algebra and
the $q$-Schur--Weyl duality}
\author{Minoru ITOH}
\date{}
\address{Department of Mathematics and Computer Science, 
          Faculty of Science,
          Kagoshima University, Kagoshima 890-0065, Japan}
\email{itoh@sci.kagoshima-u.ac.jp }
\keywords{
tensor algebra, 
Weyl algebra, 
Clifford algebra,
quantum enveloping algebra,
Iwahori--Hecke algebra,
Schur--Weyl duality}
\subjclass[2010]{Primary 15A72
; Secondary 17B37
, 20C08
}
\thanks{This research was partially supported by 
JSPS Grant-in-Aid for Young Scientists (B) 24740021.}
\begin{abstract}
This paper presents a $q$-analogue of an extension of the tensor algebra given by the same author.
This new algebra naturally contains the ordinary tensor algebra and
the Iwahori--Hecke algebra type $A$ of infinite degree.
Namely this algebra can be regarded as a natural mix of these two algebras.
Moreover, we can consider natural ``derivations'' on this algebra.
Using these derivations, we can easily prove the $q$-Schur--Weyl duality 
(the duality between the quantum enveloping algebra of the general linear Lie algebra 
and the Iwahori--Hecke algebra of type $A$).
\end{abstract}
\maketitle
\theoremstyle{plain}
   \newtheorem{theorem}{Theorem}[section]
   \newtheorem{proposition}[theorem]{Proposition}
   \newtheorem{lemma}[theorem]{Lemma}
   \newtheorem{corollary}[theorem]{Corollary}
\theoremstyle{remark}
   \newtheorem*{remark}{Remark}
   \newtheorem*{remarks}{Remarks}
\numberwithin{equation}{section}
\newcommand{\mybinom}[2]{\left(\!\genfrac{}{}{0pt}{}{#1}{#2}\!\right)}
\newcommand{\bibinom}[2]{\left(\!\!\left(\!\genfrac{}{}{0pt}{}{#1}{#2}\!\right)\!\!\right)}
%
%
\section{Introduction}
%
%
This paper presents a $q$-analogue of an extension of the tensor algebra given in [I].
Using this algebra, we can easily prove the $q$-Schur--Weyl duality 
(the duality between the quantum enveloping algebra $U_q(\mathfrak{gl}_n)$ 
and the Iwahori--Hecke algebra of type $A$).

First let us recall the algebra $\bar{T}(V)$ given in [I].
This algebra $\bar{T}(V)$ naturally 
contains the ordinary tensor algebra $T(V)$
and the infinite symmetric group $S_{\infty}$.
Moreover, we can consider natural ``derivations'' on this algebra,
which satisfy an analogue of canonical commutation relations.
This algebra and these derivations are useful to study representations on the tensor algebra.
For example, we can prove the Schur--Weyl duality easily using this framework.

In this paper, we give a $q$-analogue of this algebra $\bar{T}(V)$.
This new algebra $\hat{T}(V)$ naturally contains the ordinary tensor algebra $T(V)$
and the Iwahori--Hecke algebra $H_{\infty}(q)$ of type $A_{\infty}$.
Namely we can regard this $\hat{T}(V)$ as a natural mix of $T(V)$ and $H_{\infty}(q)$.
We can also consider natural ``derivations'' on the algebra $\hat{T}(V)$.
These derivations are useful to describe the natural action of $U_q(\mathfrak{gl}_n)$ on $V^{\otimes p}$.
Moreover, using these derivations, we can easily prove the $q$-Schur--Weyl duality.

Some applications of $\bar{T}(V)$ were given in \cite{I}:
(i) invariant theory in the tensor algebra 
(for example, a proof of the first fundamental theorem of invariant theory
with respect to the natural action of the special linear group), and
(ii) application to immanants and the quantum immanants (a linear basis of the center of 
the universal enveloping algebra $U(\mathfrak{gl}_n)$; see \cite{O1} and \cite{O2}).
The author hopes that the algebra $\hat{T}(V)$ will be useful to study 
representation theory and invariant theory related to $U_q(\mathfrak{gl}_n)$.

The author would like to thank the referee for the valuable comments. 
%
%
\section{Definition of $\hat{T}(V)$}
%
%
Let us start with the definition of the algebra $\hat{T}(V)$
determined by a vector space $V = \mathbb{C}^n$. 
We recall that the ordinary tensor algebra is defined by
$$
   T(V) = \bigoplus_{p \geq 0} T_p(V)
$$
with $T_p(V) = V^{\otimes p}$.
Noting this, we define $\hat{T}(V)$ as a vector space by
$$
   \hat{T}(V) = \bigoplus_{p \geq 0} \hat{T}_p(V),
$$
where $\hat{T}_p(V)$ is the following induced representation:
$$
   \hat{T}_p(V)
   = \operatorname{Ind}_{H_p(q)}^{H_{\infty}(q)} V^{\otimes p}
   = H_{\infty}(q) \otimes_{H_p(q)} V^{\otimes p}.
$$

Here the notation is as follows.
First $H_p(q)$ is the Iwahori--Hecke algebra of type $A_{p-1}$.
Namely this is the $\mathbb{C}$-algebra defined by the following generators and relations:
\begin{align*}
   \text{generators: }  
   &t_1,\ldots,t_{p-1},  \\
   \text{relations: }  
   &(t_r - q)(t_r + q^{-1}) = 0, \\
   &t_r t_{r+1} t_r = t_{r+1} t_r t_{r+1}, \\
   &t_r t_s = t_s t_r, \quad \text{for $|r-s| > 1$}.
\end{align*}
We define $H_{\infty}(q)$ as the inductive limit of
the natural inclusions $H_0(q) \subset H_1(q) \subset \cdots$.
Next, $H_p(q)$ naturally acts on $T_p(V) = V^{\otimes p}$ as follows (\cite{J}):
$$
   t_r = \overbrace{\operatorname{id}_V \otimes \cdots \otimes \operatorname{id}_V}^{r-1}
    \otimes t \otimes 
   \overbrace{\operatorname{id}_V \otimes \cdots \otimes \operatorname{id}_V}^{n-r-1}. 
$$
Here we define $t \in \operatorname{End}(V \otimes V)$ by
\begin{align*}
   t e_i e_j &= 
   \begin{cases}
   q e_j e_i, & i=j, \\
   e_j e_i, & i>j, \\
   e_j e_i + (q-q^{-1}) e_i e_j, & i<j,
   \end{cases}
\end{align*}
where $e_1,\ldots,e_n$ mean the standard basis of $V$.
Note that we omit the symbol ``$\otimes$.''
Thus we have explained the definition of $\hat{T}(V)$ as a vector space.

Moreover, we consider a natural algebra structure of $\hat{T}(V)$.
Namely, for $\sigma v_1 \cdots v_k \in \hat{T}_k(V)$ and 
$\tau w_1 \cdots w_l \in \hat{T}_l(V)$, we define their product by
$$
   \sigma v_1 \cdots v_k \cdot
   \tau w_1 \cdots w_l
   = 
   \sigma \alpha^k(\tau) v_1 \cdots v_k w_1 \cdots w_l.
$$
Here $\sigma$ and $\tau$ are elements of $H_{\infty}(q)$,
and $v_1, \ldots, v_k, w_1, \ldots, w_l$ are vectors in $V$. 
Moreover $\alpha$ is the algebra endomorphism on $H_{\infty}(q)$ defined by 
\begin{align*}
   \alpha \colon 
   H_{\infty}(q) \to H_{\infty}(q), \quad 
   t_r \mapsto t_{r+1}.
\end{align*}
This multiplication is well defined.
With this multiplication, 
$\hat{T}(V)$ becomes an associative graded algebra.

\begin{remark}
   In \cite{I}, the definition of $\bar{T}(V)$ was based 
   on the \textit{left} action of $S_p$ on $V^{\otimes p}$.
   However, in this paper, 
   we defined $\hat{T}(V)$ using the \textit{right} action of $H_p(q)$ on $V^{\otimes p}$.
   Actually we can also define a similar algebra using the left action,
   but we employ our definition because this is compatible with the action of $U_q(\mathfrak{gl}(V))$
   (see Section~\ref{sec:representation of Uq(gl)}).
\end{remark}
%
%
\section{The multiplication by $v \in V$ and the derivation by $v^* \in V^*$}
\label{sec:multiplications and derivations}
%
%
In this section we define two series of fundamental operators on $\hat{T}(V)$,
namely the \textit{multiplications} by vectors in $V$ 
and the \textit{derivations} by covectors in $V^*$.

First,
let $R(\varphi)$ denote the right multiplication by $\varphi \in \hat{T}(V)$:
$$
   R(\varphi) \colon \hat{T}(V) \to \hat{T}(V), \quad
   \psi \mapsto \psi\varphi.
$$
This operator is obviously fundamental,
and the following two cases are particularly fundamental:
(i) the case that $\varphi$ is a vector in $V$,
and (ii) the case that $\varphi$ is an element of $H_{\infty}(q)$.
Indeed the other cases can be generated by these two cases.
Note that $R(v)$ for $v \in V \subset \hat{T}_1(V)$ raises the degree by one, and
$R(\sigma)$ for $\sigma \in H_{\infty}(q) = \hat{T}_0(V)$ does not change the degree.

Next, 
we define an operator $R(v^*)$ associated to a covector $v^* \in V^*$.
When $v^*$ is a member of the dual basis $e^*_1,\ldots,e^*_n$,
we define $R(e^*_i) \in \operatorname{End}_{\mathbb{C}}(\hat{T}(V))$ by
\begin{align}
   R(e^*_i) \colon 
   \hat{T}_p(V) &\to \hat{T}_{p-1}(V), 
\label{eq:definition of R(e^*_i)} \\
   \sigma v_1 \cdots v_p
   &\mapsto
   \sum_{r=1}^p
   \sigma 
   k_i^{-1}(v_1) \cdots k_i^{-1}(v_{r-1}) 
   \langle e^*_i, v_r \rangle 
   g_i(v_{r+1}) \cdots g_i(v_p).
\notag
\end{align}
Here, 
$k_i$ is the linear transformation on $V$,
and $g_i$ is the linear map from $V$ to $H_2(q) \otimes V$ defined as follows:
$$
   k_i \colon V \to V, \quad
   e_j \mapsto q^{\delta_{ij}} e_j, \qquad\qquad
   g_i \colon V \to H_2(q) \otimes V, \quad
   e_j \mapsto 
   \begin{cases}
       t_1 e_j , & i \leq j, \\
       t_1^{-1} e_j, & i > j.
   \end{cases}
$$
Based on this, 
we define $R(v^*)$ in such a way that 
$R \colon V^* \to \operatorname{End}_{\mathbb{C}}(\hat{T}(V))$ is linear.
We call this $R(v^*)$ the \textit{derivation} by $v^* \in V^*$.

For example, we have
\begin{align*}
   R(e^*_1) e_1 e_1 e_2
   &= \langle e^*_1, e_1 \rangle t_1 e_1 t_1 e_2
   + q^{-1} e_1 \langle e^*_1, e_1 \rangle t_1 e_2
   + q^{-1} e_1 q^{-1} e_1 \langle e^*_1, e_2 \rangle \\
   &= t_1 t_2 e_1 e_2
   + q^{-1} t_1 e_1 e_2.
\end{align*}

Let us check the well-definedness of the definition (\ref{eq:definition of R(e^*_i)}) of $R(e^*_i)$.
For this,
we consider a linear map $f_r \colon T_p(V) \to \hat{T}_{p-1}(V)$ defined by
$$
   f_r(v_1 \cdots v_p)
   = k_i^{-1}(v_1) \cdots k_i^{-1}(v_{r-1}) 
   \langle e^*_i, v_r \rangle 
   g_i(v_{r+1}) \cdots g_i(v_p),
$$
so that $R(e^*_i) = \sum_{r=1}^p f_r$.
For the well-definedness of (\ref{eq:definition of R(e^*_i)}),
it suffices to show that $t_1,\ldots,t_{p-1}$ commute with $\sum_{r=1}^p f_r$.
Namely we only have to show the following lemma:

\begin{lemma} \sl
   For $s = 1,\ldots,p-1$, the following hold\textnormal{:} \\
   \textnormal{(1)}
   $t_s$ commutes with $f_r$ unless $r= s$, $s+1$. \\
   \textnormal{(2)} 
   $t_s$ commutes with $f_s + f_{s+1}$.
\end{lemma}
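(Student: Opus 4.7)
The plan is a direct case analysis on the position of $r$ relative to $s, s+1$, after reducing $f_r$ to a normal form. Using the algebra structure of $\hat{T}(V)$ and writing $g_i(v_k) = t_1^{\epsilon_k} v_k$ with $\epsilon_k \in \{\pm 1\}$ determined by the basis index of $v_k$, iterated application of the product formula yields
$$
   f_r(v_1 \cdots v_p)
   = \langle e^*_i, v_r \rangle \cdot
   t_r^{\epsilon_{r+1}} t_{r+1}^{\epsilon_{r+2}} \cdots t_{p-1}^{\epsilon_p}
   \cdot k_i^{-1}(v_1) \cdots k_i^{-1}(v_{r-1}) v_{r+1} \cdots v_p.
$$
This makes explicit which generators of $H_\infty(q)$ appear on the left, and so how $t_s$ interacts with $f_r$.

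For part (1) with $r \geq s+2$, both $v_s$ and $v_{s+1}$ are processed by $k_i^{-1}$, and the $H_\infty(q)$-factor only involves $t_j$ with $j \geq s+2$, all of which commute with $t_s$. The claim then reduces to $t \circ (k_i^{-1} \otimes k_i^{-1}) = (k_i^{-1} \otimes k_i^{-1}) \circ t$ on $V \otimes V$, which is immediate since $k_i^{-1} \otimes k_i^{-1}$ multiplies $e_j e_l$ by the symmetric scalar $q^{-\delta_{ij}-\delta_{il}}$. For $r \leq s-1$, both $v_s, v_{s+1}$ are processed by $g_i$, and the claim reduces to the intertwining identity $t_{k+1} \cdot G_i(x) = G_i(t_k x)$ for the map $G_i \colon V^{\otimes m} \to \hat{T}_m(V)$, $u_1 \cdots u_m \mapsto g_i(u_1) \cdots g_i(u_m)$, where $t_k$ acts on positions $k, k+1$ of $V^{\otimes m}$. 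By associativity and commutativity past $g_i$-factors at non-adjacent positions, this reduces to the base case $m = 2$, $k = 1$: $t_2 \cdot g_i(v) g_i(w) = \sum_j g_i(a_j) g_i(b_j)$ in $\hat{T}_2(V)$ whenever $t(vw) = \sum_j a_j b_j$. The base case is a finite check in the three subcases $v = e_j, w = e_l$ (with $j = l$, $j > l$, $j < l$), using the braid relation $t_1 t_2 t_1 = t_2 t_1 t_2$ and the $H_2(q)$-quotient structure of $\hat{T}_2(V)$.

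For part (2), neither $f_s$ nor $f_{s+1}$ commutes with $t_s$ individually, but their sum does. After factoring out the common prefix $X = k_i^{-1}(v_1) \cdots k_i^{-1}(v_{s-1})$ and suffix $Y = g_i(v_{s+2}) \cdots g_i(v_p)$, the identity becomes a local statement: the combined middle $\langle e^*_i, v_s \rangle g_i(v_{s+1}) + k_i^{-1}(v_s) \langle e^*_i, v_{s+1} \rangle$, sandwiched between $X$ and $Y$ and acted on from the left by $t_s$, must equal the result of first applying $t$ to $v_s \otimes v_{s+1}$ and then forming the corresponding sum. This is the main obstacle: the cross-terms produced by $t_s$ interacting with the $t_{s+1}$-factor coming from $Y$, and (for the first summand) with the $t_s$-factor coming from $g_i(v_{s+1})$, must cancel precisely against those produced by $t$ on $V \otimes V$. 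The verification is a finite case analysis on $v_s = e_j$, $v_{s+1} = e_l$ in the three subcases $j = l$, $j > l$, $j < l$, and the cancellation hinges on the Hecke quadratic relation $(t_s - q)(t_s + q^{-1}) = 0$ together with the braid relations in $H_\infty(q)$.
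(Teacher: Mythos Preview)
Your proof is correct and follows essentially the same plan as the paper: a direct case analysis, with part~(2) left as a finite check on basis vectors. Your normal form and the intertwining identity $t_{k+1}G_i = G_i\,t_k$ organize the case $r<s$ more transparently than the paper, which instead records the equivalent Hecke identity $t_2^{\gamma} t_1^{\varepsilon} t_2^{\delta} t_1^{-\gamma} = t_1^{\delta} t_2^{\varepsilon}$; your base case $t_2\,G_i(vw)=G_i(t(vw))$ unpacks to exactly this relation once one rewrites $e_{l}e_{j}$ via the $H_2(q)$-action. One simplification in your treatment of~(2): the interaction with $Y$ that you flag does not actually occur. Writing $M(v,w)=\langle e^*_i,v\rangle\,g_i(w)+\langle e^*_i,w\rangle\,k_i^{-1}(v)\in\hat{T}_1(V)$, the identity $t_1\,M(v,w)=M(t(v\otimes w))$ already holds in $\hat{T}_1(V)$ (a short check using only the quadratic relation $t_1^2=1+(q-q^{-1})t_1$), and then associativity of $\hat{T}(V)$ gives $t_1(M\cdot Y)=(t_1 M)\cdot Y$ with no cross-terms from $Y$ whatsoever.
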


\begin{proof}
We put $e_J = e_{j_1} \cdots e_{j_p}$ for $J = (j_1,\ldots,j_p)$.
Let us fix $I = (i_1,\ldots,i_p)$ and $1 \leq s \leq p$,
and put 
$$
   \gamma = 
   \begin{cases}
      1, & i_s \geq i_{s+1}, \\ 
      -1, & i_s < i_{s+1},
   \end{cases}
$$
so that $t_s^{\gamma} e_I = q^{\delta_{i^{}_s i_{s+1}}} e_{I'}$
with $I' = (i_1,\ldots,i_{s-1},i_{s+1},i_s,i_{s+2},\ldots,i_p)$.
To show (1),
it suffices to show $t_s^{\gamma} f_r(e_I) = q^{\delta_{i^{}_s i_{s+1}}} f_r(e_I')$ for $r \ne s, s+1$.
When $r > s+1$,
this can be deduced from the relation 
$t_2^{\gamma} t_1^{\varepsilon} t_2^{\delta} t_1^{-\gamma}
= t_1^{\delta} t_2^{\varepsilon}$
for $\gamma, \delta,\varepsilon \in \{ 1,-1 \}$.
We can show the the case $r < s$ by a direct calculation.

We can also show (2) by a direct calculation.
\end{proof}

\begin{remark}
   This well-definedness means that $R(v^*)$ commutes with the action of $H_{\infty}(q)$.
\end{remark}
%
%
\section{Commutation relations}
%
%
For the multiplications and derivations introduced in the previous section,
we have the following commutation relations.

\begin{theorem}\label{thm:commutation relations 1}\sl
   For $i<j$, we have
   \begin{align*}
      R(e_i) R(e_i) &= q^{-1} R(e_i) R(e_i) R(t_1) = q R(e_i) R(e_i) R(t_1^{-1}), \\
      R(e_i) R(e_j) &= R(e_j) R(e_i) R(t_1^{-1}), \\
      R(e_j) R(e_i) &= R(e_i) R(e_j) R(t_1),\allowdisplaybreaks \\[10pt]
      R(e^*_i) R(e^*_i) &= q^{-1} R(t_1) R(e^*_i) R(e^*_i) = q R(t_1^{-1}) R(e^*_i) R(e^*_i), \\ 
      R(e^*_i) R(e^*_j) &= R(t_1^{-1}) R(e^*_j) R(e^*_i), \\
      R(e^*_j) R(e^*_i) &= R(t_1) R(e^*_i) R(e^*_j),\allowdisplaybreaks \\[10pt]
      R(e^*_i) R(e_i) &= R(e_i) R(t_1) R(e^*_i) + K_i^{-1} = R(e_i) R(t_1^{-1}) R(e^*_i) + K_i, \\
      R(e^*_j) R(e_i) &= R(e_i) R(t_1^{-1}) R(e^*_j), \\
      R(e^*_i) R(e_j) &= R(e_j) R(t_1) R(e^*_i),
   \end{align*}
   where $K_i$ is the linear transformation on $\hat{T}(V)$ defined by
   $$
      K_i \colon \sigma e_{i_1} \cdots e_{i_p} \mapsto 
      q^{\delta_{ii_1} + \cdots + \delta_{ii_p}} \sigma e_{i_1} \cdots e_{i_p}.
   $$
\end{theorem}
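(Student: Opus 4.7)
The plan is to verify each of the nine identities by a direct calculation on basis vectors. A preliminary observation is that every operator appearing in the statement---$R(e_i)$, $R(e_i^*)$, $R(t_1^{\pm 1})$, and $K_i$---commutes with left multiplication by any $\sigma \in H_\infty(q) = \hat T_0(V)$: for $R(e_i)$ and $K_i$ this is immediate from the formulas, for $R(t_1^{\pm 1})$ it follows from the associativity of the product in $\hat T(V)$ together with $\alpha^k(t_1) = t_{k+1}$, and for $R(e_i^*)$ it is the content of the remark following the lemma of the previous section. Since $\hat T(V)$ is generated as a left $H_\infty(q)$-module by the pure tensors $e_I = e_{i_1}\cdots e_{i_p}$, it suffices to check each relation on such elements.

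For the first block, apply both sides to $e_I \in \hat T_p(V)$: the left-hand side yields $e_I e_i e_j$ (or $e_I e_j e_i$), while on the right-hand side the factor $R(t_1^{\pm 1})$, by the multiplication rule of $\hat T(V)$, is transported through the $\alpha$-shift and effectively acts as $t_{p+1}^{\pm 1}$ on the two appended positions of the resulting monomial. The three sub-cases $i<j$, $i=j$, $i>j$ then reduce directly to the defining formulas for $t$ on $V \otimes V$, supplemented by the quadratic Hecke identity $t^{-1} = t - (q - q^{-1})$.

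For the second block, expand both sides using $R(e^*) = \sum_r f_r$. The composite $R(e_j^*) R(e_i^*) e_I$ becomes a double sum indexed by pairs of ``deleted'' positions, each summand being a scalar (arising from the $k^{-1}$-prefactors and the dual pairings) times a monomial built from $g$-factors. Inserting $R(t_1^{\pm 1})$ on the left amounts to applying $t_1^{\pm 1}$ to the leading Hecke element produced by the $g$'s, which is exactly what is needed to interchange the two orderings of the derivations; the verification then reduces to repeated applications of the braid and quadratic relations in $H_\infty(q)$.

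The third block is the main obstacle. On the one hand, $R(e_j^*) R(e_i) e_I$ is a sum over $r = 1, \ldots, p+1$, in which $r = p+1$ corresponds to the derivation hitting the newly appended $e_i$. On the other hand, $R(e_i) R(t_1) R(e_j^*) e_I$ produces a sum only over $r = 1, \ldots, p$, but the inserted $R(t_1)$, under the $\alpha$-shift, promotes every subsequent $g_j$-factor to the next neighbouring position---exactly accounting for the appended vector. For $i \ne j$ the $r = p+1$ term vanishes by $\langle e_j^*, e_i \rangle = 0$ and the remaining terms agree. For $i = j$ the extra term survives and contributes the scalar $q^{-(\delta_{i i_1} + \cdots + \delta_{i i_p})}$ accumulated from the $k_i^{-1}$-prefactors, which is by definition $K_i^{-1} e_I$, yielding the first Cartan-type form. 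The second form, with $K_i$ and $R(t_1^{-1})$, is verified by an entirely parallel calculation, and the consistency of the two forms is controlled by the identity $R(t_1) - R(t_1^{-1}) = (q - q^{-1})\operatorname{id}$ that follows from the Hecke relation. The delicate bookkeeping---tracking how the $\alpha$-shift in the multiplication of $\hat T(V)$ interacts with the position-dependent $g$-factors---is where all the work lies.
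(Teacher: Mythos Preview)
Your overall strategy---reduce to pure tensors using commutativity with $H_\infty(q)$, then compute directly---is the same as the paper's. But you have the difficulty assessment inverted. The paper states that the first and third blocks (vector--vector and derivation--vector) are straightforward direct calculations, while the second block (derivation--derivation) is the only case requiring real work. Your sketch treats the third block as ``the main obstacle'' and dispatches the second block in one sentence, which is where the argument actually becomes thin.

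For the derivation--derivation relations, your claim that ``inserting $R(t_1^{\pm1})$ on the left amounts to applying $t_1^{\pm1}$ to the leading Hecke element'' is not quite right: $R(t_1)$ on a degree-$(p-2)$ element right-multiplies by $\alpha^{p-2}(t_1)=t_{p-1}$, not by $t_1$, so what you have to track is a Hecke identity sitting at the top of a tower of shifted $t$'s. The paper handles this by a reduction you do not mention: since derivations commute with the $H_\infty(q)$-action, it suffices to test the sixth relation on monomials of the special form $e_{k_1}\cdots e_{k_m}\,e_i^{\,a}e_j^{\,b}$ with $k_1,\dots,k_m\notin\{i,j\}$ (any monomial is Hecke-equivalent to one of these). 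On such a monomial both $R(e_i^*)$ and $R(e_j^*)$ act by explicit closed formulas, and the comparison collapses to the single Hecke identity $t_r^{(a+b-r)}t_{a-1+s}^{(b-s)}=t_{a+s}^{(b-s)}t_r^{(a+b-r)}$, where $t_k^{(c)}=t_kt_{k+1}\cdots t_{k+c-1}$. Without this reduction the double sum you describe is considerably messier, and ``repeated applications of the braid and quadratic relations'' is not yet a proof.
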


Namely we can exchange two multiplications by vectors putting 
$t_1$ or $t_1^{-1} \in H_2(q)$ on the right of these two operators.
Similarly we can exchange two derivations
putting $t_1$ or $t_1^{-1}$ on the left of two operators.
The most interesting one is the commutation relation between a derivation and a multiplication.
This time, $t_1$ or $t_1^{-1}$ appears in the middle of these two operators.
We can regard these relations as an analogue of the canonical commutation relations.

\begin{proof}[Proof of Theorem~\textsl{\ref{thm:commutation relations 1}}]
These relations can be checked by direct calculations
except for the commutation relations between two derivations. 
Thus, we here prove the sixth relation, from which
the fifth relation is immediate.
We can prove the fourth relation similarly (actually more easily).

To show the sixth relation, it suffices to prove 
$$
   R(e^*_j) R(e^*_i) e_{k_1} \cdots e_{k_m} e_i^a e_j^b
   = R(t_1) R(e^*_i) R(e^*_j) e_{k_1} \cdots e_{k_m} e_i^a e_j^b
$$ 
for $k_1,\ldots,k_m \ne i,j$,
because the derivations commute with the action of $H_{\infty}(q)$.
By the definition of derivations, we have
\begin{align*}
   R(e^*_i) e_{k_1} \cdots e_{k_m} e_i^a e_j^b 
   &= \sum_{r=1}^a e_{k_1} \cdots e_{k_m}  e_i^{r-1} (t_1 e_i)^{a-r} (t_1 e_j)^b
   = \sum_{r=1}^a e_{k_1} \cdots e_{k_m} t_r^{(a+b-r)} e_i^{a-1} e_j^b, \\
   R(e^*_j) e_{k_1} \cdots e_{k_m} e_i^a e_j^b 
   &= \sum_{s=1}^b e_{k_1} \cdots e_{k_m} e_i^a e_j^{s-1} (t_1 e_j)^{b-s}
   = \sum_{s=1}^b e_{k_1} \cdots e_{k_m} t_{a+s}^{(b-s)} e_i^a e_j^{b-1}, 
\end{align*}
where we put $t_k^{(c)} = t_k t_{k+1} \cdots t_{k+c-1}$. 
Using these, we have
\begin{align*}
   R(e^*_j) R(e^*_i) e_{k_1} \cdots e_{k_m} e_i^a e_j^b 
   &= \sum_{r=1}^a \sum_{s=1}^b e_{k_1} e_{k_1} \cdots e_{k_m}  t_r^{(a+b-r)} t_{a-1+s}^{(b-s)} e_i^{a-1} e_j^{b-1}, \\
   R(t_1)R(e^*_i)R(e^*_j) e_{k_1} \cdots e_{k_m} e_i^a e_j^b 
   &= \sum_{r=1}^a \sum_{s=1}^b e_{k_1} \cdots e_{k_m} t_{a+s}^{(b-s)} t_r^{(a+b-r)} e_i^{a-1} e_j^{b-1}.
\end{align*}
These are equal,
because we have $t_r^{(a+b-r)} t_{a-1+s}^{(b-s)} = t_{a+s}^{(b-s)} t_r^{(a+b-r)}$
by a calculation.
\end{proof}

It is natural to consider the operator algebra generated by
$R(v)$, $R(v^*)$ and $R(\sigma)$ with $v \in V$, $v^* \in V^*$ and $\sigma \in H_{\infty}(q)$.
We can regard this operator algebra as an analogue of the Weyl algebras and the Clifford algebras.

The following commutation relations with $K_i$ are also fundamental:

\begin{theorem}\label{thm:commutation relations 2}\sl
   We have
   $$
      K_j R(e_i) = q^{\delta_{ij}} R(e_i) K_j, \qquad
      K_j R(e^*_i) = q^{-\delta_{ij}} R(e^*_i) K_j, \qquad
      K_j R(t_r) = R(t_r) K_j.
   $$
\end{theorem}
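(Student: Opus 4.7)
The plan is to verify each of the three identities by evaluating both sides on a general element $\sigma e_{i_1}\cdots e_{i_p}$, exploiting the fact that $K_j$ depends only on the vector-index sequence $(i_1,\ldots,i_p)$ and is insensitive to the Hecke prefix $\sigma$ as well as to any Hecke factors that surface during the calculation.

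For the third relation $K_j R(t_r)=R(t_r)K_j$, I would appeal directly to the multiplication rule of $\hat T(V)$: $\sigma e_{i_1}\cdots e_{i_p}\cdot t_r = \sigma\alpha^p(t_r)e_{i_1}\cdots e_{i_p}=\sigma t_{r+p}e_{i_1}\cdots e_{i_p}$. Right multiplication by $t_r$ modifies only the Hecke prefix and leaves the vector sequence intact, so both composites yield the same scalar $q^{\delta_{j,i_1}+\cdots+\delta_{j,i_p}}$ times $\sigma t_{r+p}e_{i_1}\cdots e_{i_p}$.

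For $K_j R(e_i)=q^{\delta_{ij}}R(e_i)K_j$, I would note that $R(e_i)$ simply appends $e_i$ to the word, producing $\sigma e_{i_1}\cdots e_{i_p}e_i$. The number of $e_j$'s in this extended word exceeds the original count by exactly $\delta_{ij}$, which accounts for the extra factor $q^{\delta_{ij}}$.

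The substantive step is the derivation relation $K_j R(e^*_i)=q^{-\delta_{ij}}R(e^*_i)K_j$. I would unwind the definition of $R(e^*_i)$ and collapse the iterated $g_i$-factors by repeated use of the multiplication rule together with the shift $\alpha$, so that the $r$-th summand of $R(e^*_i)\sigma e_{i_1}\cdots e_{i_p}$ takes the form
$$
   \delta_{i,i_r}\,q^{-(\delta_{i,i_1}+\cdots+\delta_{i,i_{r-1}})}\,\sigma\tau_r\,e_{i_1}\cdots e_{i_{r-1}}e_{i_{r+1}}\cdots e_{i_p},
$$
where $\tau_r$ is a product of $t_k^{\pm 1}$'s supplied by the $g_i$'s. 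Since $K_j$ reads only the surviving vector indices, applying $K_j$ to the image brings out the factor $q^{\sum_{s\ne r}\delta_{j,i_s}}$, whereas applying $K_j$ first contributes $q^{\sum_s\delta_{j,i_s}}$. When $i=j$ every nonzero summand satisfies $i_r=i=j$, so the two exponents differ by exactly $1$ and produce the claimed $q^{-1}$; when $i\ne j$ the two exponents coincide. The only mild bookkeeping concern is tracking $\tau_r$, but it never enters the analysis since $K_j$ ignores Hecke elements.
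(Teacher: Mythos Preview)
The paper states this theorem without proof, treating the three identities as immediate from the definitions. Your proposal supplies exactly the direct verification the paper leaves implicit, and each step is correct: right multiplication by $t_r$ only alters the Hecke prefix via $\alpha^p$, $R(e_i)$ appends a single $e_i$ and hence shifts the $K_j$-exponent by $\delta_{ij}$, and for $R(e^*_i)$ your reduction of the $r$-th summand to the form $\delta_{i,i_r}\,q^{-\sum_{s<r}\delta_{i,i_s}}\,\sigma\tau_r\,e_{i_1}\cdots\widehat{e_{i_r}}\cdots e_{i_p}$ (with $\tau_r = t_r^{\pm 1}t_{r+1}^{\pm 1}\cdots t_{p-1}^{\pm 1}$ coming from the $g_i$'s after repeated use of the shift $\alpha$) is accurate, so comparing $K_j$-exponents on the two sides gives the factor $q^{-\delta_{j,i_r}} = q^{-\delta_{ij}}$ on each nonzero summand.
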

%
%
\section{The natural representation of $U_q(\mathfrak{gl}(V))$ on $V^{\otimes p}$}
\label{sec:representation of Uq(gl)}
%
%
We can use the operators introduced in Section~\ref{sec:multiplications and derivations}
to study the natural representation of the quantum enveloping algebra $U_q(\mathfrak{gl}(V))$
on $V^{\otimes p}$.

First, let us recall the definition of $U_q(\mathfrak{gl}(V))$.
For $V = \mathbb{C}^n$,
we define the $\mathbb{C}$-algebra $U_q(\mathfrak{gl}(V))$
by the following generators and relations (\cite{J}):
\begin{align*}
   \text{generators: }&
   q^{\pm\varepsilon_1/2},\ldots,q^{\pm\varepsilon_n/2},
   \hat{e}_1,\ldots,\hat{e}_{n-1},
   \hat{f}_1,\ldots,\hat{f}_{n-1}, \displaybreak[0]\\
   \text{relations: }
   & q^{\varepsilon_i/2} q^{\varepsilon_j/2} = q^{\varepsilon_j/2} q^{\varepsilon_i/2}, \qquad
   q^{\varepsilon_i/2} q^{-\varepsilon_i/2} = q^{-\varepsilon_i/2} q^{\varepsilon_i/2} = 1, \\
   & q^{\varepsilon_i/2} \hat{e}_j q^{-\varepsilon_i/2} = q^{\delta_{ij} - \delta_{i,j+1}} \hat{e}_j, \qquad
   q^{\varepsilon_i/2} \hat{f}_j q^{-\varepsilon_i/2} = q^{-\delta_{ij} + \delta_{i,j+1}} \hat{f}_j, \\
   & \hat{e}_i \hat{f}_j - \hat{f}_j \hat{e}_i 
   = \delta_{ij} 
   \frac{q^{(\varepsilon_i - \varepsilon_{i+1})/2} - q^{(-\varepsilon_i + \varepsilon_{i+1})/2}}{q-q^{-1}}, \\ 
   & \hat{e}_i \hat{e}_j =  \hat{e}_j \hat{e}_i, \qquad 
   \hat{f}_i \hat{f}_j =  \hat{f}_j \hat{f}_i
   \text{ for $|i-j| > 1$}, \\
   & \hat{e}_i^2 \hat{e}_{i \pm 1} 
   - (q + q^{-1}) \hat{e}_i \hat{e}_{i \pm 1} \hat{e}_i 
   + \hat{e}_{i \pm 1} \hat{e}_i^2 = 0, \\
   & \hat{f}_i^2 \hat{f}_{i \pm 1} 
   - (q + q^{-1}) \hat{f}_i \hat{f}_{i \pm 1} \hat{f}_i 
   + \hat{f}_{i \pm 1} \hat{f}_i^2 = 0.
\end{align*}
Here we denote $q^{a_1} \cdots q^{a_k}$ simply 
by $q^{a_1 + \cdots + a_k}$.

Next, we define 
$\hat{E}_{ij}$ and  $\hat{E}_{ji} \in U_q(\mathfrak{gl}(V))$ for $1 \leq i<j \leq n$ 
by
$$
   \hat{E}_{i,i+1} = \hat{e}_i, \qquad
   \hat{E}_{i+1,i} = \hat{f}_i
$$
and recursive relations
$$
   \hat{E}_{ik} = \hat{E}_{ij} \hat{E}_{jk} - q \hat{E}_{jk} \hat{E}_{ij}, \qquad
   \hat{E}_{ki} = \hat{E}_{kj} \hat{E}_{ji} - q^{-1} \hat{E}_{ji} \hat{E}_{kj}
$$
for $i<j<k$.
Moreover, for $i<j$ and $a \in \mathbb{C} \smallsetminus \{ 0 \}$, we put
$$
   \hat{E}_{ij}(a) =a^{-1} q^{-(\varepsilon_i + \varepsilon_j -1)/2} \hat{E}_{ij},\quad
   \hat{E}_{ji}(a) =a q^{(\varepsilon_j + \varepsilon_i -1)/2} \hat{E}_{ji},\quad
   \hat{E}_{ii}(a) = \frac{a q^{\varepsilon_i} - a^{-1} q^{-\varepsilon_i}}{q-q^{-1}}.
$$
We call this $\hat{E}_{ij}(a)$ for $1 \leq i,j \leq n$ the $L$-operator.

We denote by $\pi$ 
the natural representation of the quantum enveloping algebra $U_q(\mathfrak{gl}(V))$
on $V^{\otimes p}$.
This is determined by the following actions of generators (\cite{J}):
\begin{align*}
   \pi(\hat{e}_i) 
   &= \sum_{r=1}^p 
   k_i^{1/2} k_{i+1}^{-1/2} \otimes \cdots \otimes k_i^{1/2} k_{i+1}^{-1/2} 
   \otimes \underbrace{E_{i,i+1}}_{\text{$r$th}} 
   \otimes k_i^{-1/2} k_{i+1}^{1/2} \otimes \cdots \otimes k_i^{-1/2} k_{i+1}^{1/2}, \\
   \pi(\hat{f}_i) 
   &= \sum_{r=1}^p
   k_i^{1/2} k_{i+1}^{-1/2} \otimes \cdots \otimes k_i^{1/2} k_{i+1}^{-1/2} 
   \otimes \underbrace{E_{i+1,i}}_{\text{$r$th}} 
   \otimes k_i^{-1/2} k_{i+1}^{1/2} \otimes \cdots \otimes k_i^{-1/2} k_{i+1}^{1/2}, \\
   \pi(q^{\pm\varepsilon_i/2}) 
   &= k_i^{\pm 1/2} \otimes \cdots \otimes k_i^{\pm 1/2} = K_i^{\pm 1/2}.
\end{align*}
Here $k_i^{1/2}$ and $E_{ij}$ are the linear transformations on $V$
defined by 
$$
   k_i^{1/2} \colon e_h \mapsto q^{\delta_{ih}/2} e_h, \qquad
   E_{ij} \colon e_h \mapsto \delta_{jh} e_i.
$$

We can use our operators to express this representation $\pi$:

\begin{theorem}\label{thm:expression of pi} \sl
   We have
   $$
      \pi(\hat{E}_{ij}(1)) = R(e_i) R(e^*_j).
   $$
\end{theorem}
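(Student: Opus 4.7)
The plan is to establish $\pi(\hat E_{ij}(1)) = R(e_i)R(e_j^*)$ case by case, separating the diagonal case $i=j$ (an immediate consequence of Theorem \ref{thm:commutation relations 1}) from the off-diagonal cases, which I would handle by a direct computation for $|i-j|=1$ and by induction on $|i-j|$ otherwise. Throughout, I work on a pure tensor $e_{k_1}\cdots e_{k_p}\in V^{\otimes p}$; the Hecke element produced by $R(e^*_j)$ on such a tensor always lies in $H_p(q)$ and so gets absorbed into the $H_p(q)$-action on the length-$p$ tensor obtained after applying $R(e_i)$, ensuring that $R(e_i)R(e_j^*)$ preserves $V^{\otimes p}\subset\hat T_p(V)$.

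\emph{Diagonal case.} Subtracting the two expressions given for $R(e^*_i)R(e_i)$ in Theorem \ref{thm:commutation relations 1} yields
\[
R(e_i)\bigl(R(t_1)-R(t_1^{-1})\bigr)R(e^*_i) = K_i - K_i^{-1}.
\]
The Hecke relation $(t_1-q)(t_1+q^{-1})=0$ implies $t_1 - t_1^{-1} = q - q^{-1}$ in $H_\infty(q)$, so the middle operator is the scalar $q-q^{-1}$. Dividing gives $R(e_i)R(e^*_i) = (K_i-K_i^{-1})/(q-q^{-1}) = \pi(\hat E_{ii}(1))$.

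\emph{Off-diagonal case.} In the Chevalley case $j=i+1$ (with $j=i-1$ analogous), I would expand $R(e^*_{i+1})(e_{k_1}\cdots e_{k_p})$ as a sum over positions $r$ with $k_r=i+1$; each summand carries a scalar $q^{-\#\{s<r:\,k_s=i+1\}}$ from the $k_{i+1}^{-1}$ factors and a Hecke element $t_r^{\eta_1}\cdots t_{p-1}^{\eta_{p-r}}$ whose signs record whether $k_{r+a}\ge i+1$. Applying $R(e_i)$ appends $e_i$, and absorbing the Hecke element into the $H_p(q)$-action on the resulting length-$p$ tensor yields an explicit sum that I would match, term by term, with $\pi(\hat E_{i,i+1}(1)) = q^{1/2}K_i^{-1/2}K_{i+1}^{-1/2}\pi(\hat e_i)$, using the given formula for $\pi(\hat e_i)$. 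For $|i-j|>1$, I would induct on $|i-j|$ using the recursion $\hat E_{ik}=\hat E_{ij}\hat E_{jk}-q\hat E_{jk}\hat E_{ij}$ for $i<j<k$: after rewriting it in terms of the $L$-operators $\hat E(\cdot)(1)$ (tracking the $q$-factors produced by moving $q^{\varepsilon_\ell/2}$ past $\hat E_{ij}$), I would match the resulting recursion on the $R$-side by rearranging $R(e_i)R(e_j^*)R(e_j)R(e_k^*)$ via Theorems \ref{thm:commutation relations 1} and \ref{thm:commutation relations 2}.

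The hardest step is the bookkeeping in the $|i-j|=1$ base case: the signs $\eta_a$ produced by the derivation depend on the comparison of $i+1$ with every $k_{r+a}$, while the summand of $\pi(\hat e_i)$ at position $r$ contributes weight factors $k_i^{1/2}k_{i+1}^{-1/2}$ to the left of $r$ and their inverses to the right, with the matrix unit $E_{i,i+1}$ converting an entry $e_{i+1}$ to $e_i$ at position $r$. Reconciling the two sides requires a three-way case split of positions ($k_s = i$, $k_s = i+1$, or neither) and a careful cumulative $q$-power count.
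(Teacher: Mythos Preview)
Your proposal is correct and follows essentially the same architecture as the paper: treat $i=j$ separately, verify $|i-j|=1$ by direct computation on basis tensors, and then induct on $|i-j|$ via the recursion $\hat E_{ik}=\hat E_{ij}\hat E_{jk}-q\hat E_{jk}\hat E_{ij}$ together with Theorems~\ref{thm:commutation relations 1} and~\ref{thm:commutation relations 2}. Two minor differences are worth noting: your diagonal argument (subtracting the two forms of $R(e^*_i)R(e_i)$ and using $t_1-t_1^{-1}=q-q^{-1}$) is slicker than the paper's bare ``direct calculation''; conversely, for the inductive step the paper avoids your $q$-factor bookkeeping by passing to the conjugated operators $F_{ij}=K_j^{1/2}R(e_i)R(e^*_j)K_i^{1/2}$ (and $F_{ji}=K_i^{-1/2}R(e_j)R(e^*_i)K_j^{-1/2}$), which satisfy \emph{exactly} the same recursion as $\hat E_{ij}$ with no extra scalars, so the computation $F_{ij}F_{jk}-qF_{jk}F_{ij}=F_{ik}$ goes through cleanly.
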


\begin{proof}
We can check the assertion by a direct calculation when $i=j$.

Let us show the case $i \ne j$.
We note that
$$
   \pi(\hat{E}_{ij}(1)) = q^{-1/2} K_i^{-1/2} K_j^{-1/2} \pi(\hat{E}_{ij}) , \qquad
   \pi(\hat{E}_{ji}(1)) = q^{1/2} K_j^{1/2} K_i^{1/2} \pi(\hat{E}_{ji})
$$
for $i<j$.
Thus, it suffices to show 
\begin{equation}\label{eq:key relation of the expression of pi}
   \pi(\hat{E}_{ij}) = K_j^{1/2} R(e_i) R(e^*_j) K_i^{1/2}, \qquad
   \pi(\hat{E}_{ji}) = K_i^{-1/2} R(e_j) R(e^*_i) K_j^{-1/2}
\end{equation}
for $i<j$.
We can check these relations for $j = i + 1$ by a direct calculation.
To show the other cases, we put
$$
   F_{ij} = K_j^{1/2} R(e_i) R(e^*_j) K_i^{1/2}, \qquad
   F_{ji} = K_i^{-1/2} R(e_j) R(e^*_i) K_j^{-1/2}
$$
for $i < j$.
Then, we have 
$$
   F_{ik} = F_{ij} F_{jk} - q F_{jk} F_{ij}, \qquad
   F_{ki} = F_{kj} F_{ji} - q^{-1} F_{ji} F_{kj}
$$
for $i<j<k$.
Indeed, using Theorems~\ref{thm:commutation relations 1} and~\ref{thm:commutation relations 2},
we see the first relation as follows:
\begin{align*}
   &F_{ij} F_{jk} - q F_{jk} F_{ij} \\
   &\qquad= K_j^{1/2} R(e_i) R(e^*_j) K_i^{1/2} K_k^{1/2} R(e_j) R(e^*_k) K_j^{1/2} \\
   & \qquad\qquad 
   - q K_k^{1/2} R(e_j) R(e^*_k) K_j^{1/2} K_j^{1/2} R(e_i) R(e^*_j) K_i^{1/2} 
   \allowdisplaybreaks\\
   &\qquad= K_j^{1/2} K_k^{1/2} R(e_i) R(e^*_j) R(e_j) R(e^*_k) K_i^{1/2} K_j^{1/2} \\
   & \qquad\qquad
   - K_k^{1/2} K_j^{1/2} R(e_j) R(e^*_k) R(e_i) R(e^*_j) K_j^{1/2} K_i^{1/2} 
   \allowdisplaybreaks \\
   &\qquad= K_j^{1/2} K_k^{1/2} R(e_i) (R(e_j) R(t_1) R(e^*_j) + K_j^{-1}) R(e^*_k) K_i^{1/2} K_j^{1/2} \\
   & \qquad\qquad 
   - K_k^{1/2} K_j^{1/2} R(e_j) R(e_i) R(t_1^{-1}) R(e^*_k) R(e^*_j) K_j^{1/2} K_i^{1/2} 
   \allowdisplaybreaks\\
   &\qquad= K_j^{1/2} K_k^{1/2} R(e_i) (R(e_j) R(t_1) R(e^*_j) + K_j^{-1}) R(e^*_k) K_i^{1/2} K_j^{1/2} \\
   & \qquad\qquad
   - K_k^{1/2} K_j^{1/2} R(e_i) R(e_j) R(t_1) R(e^*_j) R(e^*_k) K_j^{1/2} K_i^{1/2} 
   \allowdisplaybreaks\\
   &\qquad= K_k^{1/2} R(e_i) R(e^*_k) K_i^{1/2} \\
   &\qquad= F_{ik}.
\end{align*}
We can show the second relation similarly.
Combining these, we have (\ref{eq:key relation of the expression of pi}).
\end{proof}

\begin{remark}
   Theorem~\ref{thm:expression of pi} is quite similar 
   to the natural action of the Lie algebra $\mathfrak{gl}(V)$ on $\mathcal{P}(V)$
   the space of all polynomial functions on $V$.
   This action $\mu$ can be expressed as 
   $$
      \mu(E_{ij}) = x_i \frac{\partial}{\partial x_j}.
   $$
   Here $x_i$ means the canonical coordinate of $V$,
   and $E_{ij}$ means the standard basis of $\mathfrak{gl}(V)$.
\end{remark}

Using Theorems~\ref{thm:commutation relations 1} and~\ref{thm:commutation relations 2}, 
we have the following relations:

\begin{proposition}\label{prop:commutation relations 3} \sl
   We have
   \begin{alignat*}{2}
      R(e_i) R(e_j) R(e^*_k) &= R(e_j) R(e^*_k) R(e_i) &\qquad 
      &\text{when $i \lessgtr j$ and $i \lessgtr k$}, \allowdisplaybreaks\\
      R(e_i) R(e_j) R(e^*_k) &= R(e_j) R(e^*_k) R(e_i) &&\\
      &\qquad
      \pm (q-q^{-1}) R(e_i) R(e^*_k) R(e_j) &\qquad 
      &\text{when $j \lessgtr i \lessgtr k$}, \allowdisplaybreaks\\
      R(e_i) R(e_j) R(e^*_i) &= R(e_j) R(e^*_i) R(e_i) 
      - K_i^{\pm 1} R(e_j) &\qquad 
      &\text{when $i \lessgtr j$}, \allowdisplaybreaks\\
      R(e_i) R(e_i) R(e^*_j) &= q^{\pm 1} R(e_i) R(e^*_j) R(e_i) 
      &\qquad &\text{when $i \lessgtr j$}, \allowdisplaybreaks\\
      R(e_i) R(e_i) R(e^*_i) 
      &= q R(e_i) R(e^*_i) R(e_i) - K_i R(e_i) && \\
      &= q^{-1} R(e_i) R(e^*_i) R(e_i) - K_i^{-1} R(e_i).&&
   \end{alignat*}
\end{proposition}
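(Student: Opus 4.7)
The plan is to derive each of the five identities by starting with the right-hand side and moving $R(e_i)$ leftward past $R(e^*_k)$ and then past $R(e_j)$, step by step using Theorems~\ref{thm:commutation relations 1} and~\ref{thm:commutation relations 2}. The Hecke quadratic relation, equivalently $R(t_1^{\pm 2}) = \mathrm{id} \pm (q-q^{-1}) R(t_1^{\pm 1})$, enters whenever two factors of the form $R(t_1^{\pm 1})$ accumulate in the middle of the product.

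The first two identities cover the case where all three indices are distinct. Moving $R(e_i)$ past $R(e^*_k)$ contributes $R(t_1^{-1})$ if $i < k$ and $R(t_1)$ if $i > k$, while moving $R(e_i)$ past $R(e_j)$ contributes $R(t_1)$ if $i < j$ and $R(t_1^{-1})$ if $i > j$. When $i$ lies on the same side of both $j$ and $k$, the two factors have opposite exponents and cancel, yielding the first identity. When $i$ lies strictly between $j$ and $k$, they share the same sign and accumulate as $R(t_1^{\pm 2})$; the Hecke quadratic relation splits this into $\mathrm{id}$ and a residual $\pm (q-q^{-1}) R(t_1^{\pm 1})$. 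The residual block $R(e_j) R(t_1^{\pm 1}) R(e^*_k)$ then collapses to $R(e^*_k) R(e_j)$ by a single further application of the derivation--multiplication relation (since $j \neq k$ here), producing exactly the $\pm (q - q^{-1}) R(e_i) R(e^*_k) R(e_j)$ correction.

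The remaining three identities involve coinciding indices. For the third identity (where the derivation index coincides with $i$), one chooses the appropriate form of the seventh relation of Theorem~\ref{thm:commutation relations 1} so that, after also swapping $R(e_j)$ and $R(e_i)$, the two middle factors of $R(t_1^{\pm 1})$ cancel; the correction term $K_i^{\mp 1}$ then commutes through $R(e_j)$ by Theorem~\ref{thm:commutation relations 2} (since $j \ne i$) to give the stated $-K_i^{\pm 1} R(e_j)$ correction. For the fourth identity (where the two multiplication indices coincide but the derivation differs), a single derivation--multiplication swap of $R(e^*_j)$ past $R(e_i)$ produces a factor $R(t_1^{\pm 1})$, which is absorbed by the diagonal identity $R(e_i)^2 R(t_1^{\pm 1}) = q^{\pm 1} R(e_i)^2$ from the first line of Theorem~\ref{thm:commutation relations 1}. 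Finally, for the fifth identity (all indices equal), both mechanisms combine: the diagonal identity supplies the prefactor $q^{\pm 1}$, while the correction $K_i^{\mp 1}$ coming from $R(e^*_i) R(e_i)$ is pushed past one $R(e_i)$ via the relation $R(e_i) K_i^{\mp 1} = q^{\pm 1} K_i^{\mp 1} R(e_i)$ of Theorem~\ref{thm:commutation relations 2}, producing the $-K_i^{\pm 1} R(e_i)$ term.

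I expect the main obstacle to be bookkeeping rather than conceptual depth: each identity splits into two cases according to the relative order of the indices, and keeping track of the signs on the exponents of $t_1$ and $K_i$ requires care. No ingredient beyond the Hecke quadratic relation is needed; once a uniform notation is fixed for the three generic positions of $i$ relative to $\{j, k\}$, the entire proposition follows mechanically from Theorems~\ref{thm:commutation relations 1} and~\ref{thm:commutation relations 2}.
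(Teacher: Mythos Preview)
Your approach is correct and is precisely what the paper intends: the paper offers no explicit proof beyond the line ``Using Theorems~\ref{thm:commutation relations 1} and~\ref{thm:commutation relations 2}, we have the following relations,'' and your sketch of pushing $R(e_i)$ leftward while absorbing or cancelling the accumulated factors $R(t_1^{\pm 1})$ via the Hecke quadratic relation is exactly the mechanical verification that line presupposes. One small bookkeeping slip: in your discussion of the third identity the correction term coming from the seventh relation of Theorem~\ref{thm:commutation relations 1} is $K_i^{\pm 1}$, not $K_i^{\mp 1}$ (it commutes unchanged past $R(e_j)$ since $j\ne i$), and similarly in the fifth identity the sign conventions on $K_i^{\mp 1}$ versus the prefactor $q^{\pm 1}$ need a second look---but these are exactly the bookkeeping issues you yourself flag, and they do not affect the soundness of the method.
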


Moreover we have the following proposition.
Indeed, using Proposition~\ref{prop:commutation relations 3},
we can rewrite $R(v_k) \cdots R(v_1) R(v^*_1) \cdots R(v^*_k)$
as a sum of products of $R(v)R(v^*)$ and $K_i$.

\begin{proposition}\label{prop:R(v)^kR(v^*)^k} \sl
   For any $v_1,\ldots,v_k \in V$ and 
   $v^*_1,\ldots,v^*_k \in V^*$, 
   we have
   $$
      R(v_k) \cdots R(v_1) R(v^*_1) \cdots R(v^*_k) \in \pi(U_q(\mathfrak{gl}(V))).
   $$
\end{proposition}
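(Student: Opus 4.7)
The plan is to induct on $k$. The base case $k=1$ follows from Theorem~\ref{thm:expression of pi}: by linearity, $R(v_1)R(v^*_1) = \pi(\hat{E}_{\cdot\cdot}(1)) \in \pi(U_q(\mathfrak{gl}(V)))$. Combined with the fact that $K_i = \pi(q^{\varepsilon_i}) \in \pi(U_q(\mathfrak{gl}(V)))$, this shows that the subalgebra $M \subseteq \operatorname{End}(\hat{T}(V))$ generated by all $R(e_i)R(e^*_j)$ and $K_c^{\pm 1}$ is contained in $\pi(U_q(\mathfrak{gl}(V)))$; it therefore suffices to prove $X_k := R(v_k)\cdots R(v_1)R(v^*_1)\cdots R(v^*_k) \in M$, and by linearity we may assume each $v_i$ and $v^*_i$ is a basis vector.

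Following the paper's hint, I would apply Proposition~\ref{prop:commutation relations 3} iteratively to pull the innermost pair $R(v_1)R(v^*_1)$ leftward past each $R(v_j)$ for $j=2,\ldots,k$. The main term after all $k-1$ moves is
\[
   R(v_1)R(v^*_1)\cdot R(v_k)R(v_{k-1})\cdots R(v_2)R(v^*_2)\cdots R(v^*_k),
\]
which is a pair in $M$ multiplied by an expression of the same shape as $X_{k-1}$, hence in $M$ by the primary inductive hypothesis. Each individual pull, governed by Proposition~\ref{prop:commutation relations 3} applied to a triple $R(v_j)R(v_1)R(v^*_1)$, also produces corrections of two types: type (a), a $(q-q^{-1})$-multiple of an expression with the same total number of $R$-factors but with an $R(e^*)$ shifted one position to the left (from relation 2 of the proposition), and type (b), a $K_i^{\pm 1}$-multiple of an expression with two fewer $R$-factors (from relations 3 and 5); in case (b) the stray $K_i^{\pm 1}$ may be shuttled to the outside using Theorem~\ref{thm:commutation relations 2}.

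To handle the type (a) corrections, which do not shorten the expression, I would prove a stronger auxiliary claim by secondary induction on the lexicographic pair $(n,p)$, where $n$ is the total number of $R$-factors and $p$ is the position of the leftmost $R(e^*)$: any word in the generators $R(e_i)$, $R(e^*_j)$, $K_c^{\pm 1}$ that is \emph{balanced} (equal numbers of $R(e)$'s and $R(e^*)$'s) and satisfies the \emph{Dyck property} (every prefix contains at least as many $R(e)$'s as $R(e^*)$'s) lies in $M$. Each application of Proposition~\ref{prop:commutation relations 3} strictly decreases this pair — type (a) decreases $p$, type (b) decreases $n$ — and if position $1$ is a $K$ or $p=2$, we strip off a leading $K$ or pair and apply the inductive hypothesis to the tail. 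The main technical obstacle is verifying that balance and the Dyck property are preserved under every substitution; this I would handle by a direct running-count check on the three affected positions, using the key fact that the prefix ending just before position $p-2$ contains no $R(e^*)$ at all (since $p$ is leftmost), so its running count is automatically nonnegative and each substitution in Proposition~\ref{prop:commutation relations 3} has the same net effect as the original triple.
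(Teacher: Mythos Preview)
Your approach is correct and is essentially the same as the paper's: both use Proposition~\ref{prop:commutation relations 3} to rewrite the expression as a sum of products of pairs $R(e_i)R(e^*_j)$ and factors $K_i^{\pm 1}$, all of which lie in $\pi(U_q(\mathfrak{gl}(V)))$ by Theorem~\ref{thm:expression of pi}. The paper states this rewriting in a single sentence without details; your induction on the lexicographic pair $(n,p)$ over balanced Dyck words is a correct and careful way to make the rewriting terminate (and in fact subsumes the ``primary'' induction on $k$, which you may drop).
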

%
%
\section{$q$-Schur--Weyl duality}
%
%
We can use our results to prove the following Jimbo duality,
namely the $q$-analogue of the Schur--Weyl duality.
This theorem was first given in \cite{J}, and several proofs have been given (see \cite{H}, \cite{Z} for example).

\begin{theorem}
\label{thm:q-Schur--Weyl duality}\sl
   Assume that $[p]! \ne 0$.
   Let us denote by $\rho$ the natural action of $H_p(q)$ on $V^{\otimes p}$.
   Then $\rho(H_p(q))$ and $\pi(U_q(\mathfrak{gl}(V)))$ are mutual commutants of each other.
   Namely we have
   $$
      \operatorname{End}(V^{\otimes p})^{\rho(H_p(q))} = \pi(U_q(\mathfrak{gl}(V))), \qquad
      \operatorname{End}(V^{\otimes p})^{\pi(U_q(\mathfrak{gl}(V)))} = \rho(H_p(q)).
   $$
\end{theorem}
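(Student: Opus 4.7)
The plan is to establish the first equality $\operatorname{End}(V^{\otimes p})^{\rho(H_p(q))} = \pi(U_q(\mathfrak{gl}(V)))$ directly using the framework of multiplications and derivations developed in this paper, and then to derive the second equality from the double centralizer theorem applied to the semisimple algebra $\rho(H_p(q))$.

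The inclusion $\pi(U_q(\mathfrak{gl}(V))) \subseteq \operatorname{End}(V^{\otimes p})^{\rho(H_p(q))}$ is the easy direction. By Theorem~\ref{thm:expression of pi} we have $\pi(\hat{E}_{ij}(1)) = R(e_i) R(e^*_j)$; since $R(e_i)$ is a right multiplication and hence commutes with the left $H_\infty(q)$-action on $\hat{T}(V)$, while $R(e^*_j)$ commutes with $H_\infty(q)$ by the remark following the well-definedness lemma of Section~\ref{sec:multiplications and derivations}, the restriction of $\pi(\hat{E}_{ij}(1))$ to $V^{\otimes p}$ commutes with $\rho(H_p(q))$. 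Combined with Theorem~\ref{thm:commutation relations 2}, which gives the commutation of $K_i = \pi(q^{\varepsilon_i})$ with $\rho(H_p(q))$, and the fact that these operators generate $\pi(U_q(\mathfrak{gl}(V)))$ (together with the half-power $K_i^{1/2}$, which commutes for the same reason), the inclusion follows.

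For the reverse inclusion, I would appeal to Proposition~\ref{prop:R(v)^kR(v^*)^k}, which asserts that every operator of the form $R(v_p)\cdots R(v_1) R(v^*_1)\cdots R(v^*_p)$, restricted to $V^{\otimes p}$, lies in $\pi(U_q(\mathfrak{gl}(V)))$. The plan is to show that the linear map
$$
   \Psi \colon V^{\otimes p} \otimes (V^*)^{\otimes p} \to \operatorname{End}(V^{\otimes p}),
   \quad
   e_I \otimes e^*_J \mapsto R(e_{i_p})\cdots R(e_{i_1}) R(e^*_{j_1})\cdots R(e^*_{j_p})|_{V^{\otimes p}},
$$
surjects onto $\operatorname{End}(V^{\otimes p})^{\rho(H_p(q))}$; together with the easy direction this forces equality. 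To prove surjectivity I would compute the matrix of $\Psi(e_I \otimes e^*_J)$ in the standard basis $\{e_K\}$ of $V^{\otimes p}$ and observe (using the explicit definition of $R(e^*_i)$ and the multiplication in $\hat{T}(V)$) that it has a ``leading'' contribution equal to a nonzero scalar multiple of the elementary matrix $E_{IJ}$, with the remaining terms expressible through the $H_p(q)$-action; combined with the $H_p(q)$-invariance of the image, a triangularity argument forces surjectivity onto the commutant.

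The second equality is then formal: under the hypothesis $[p]! \neq 0$ the algebra $H_p(q)$, and hence $\rho(H_p(q)) \subseteq \operatorname{End}(V^{\otimes p})$, is semisimple, so $\rho(H_p(q))$ equals its own double commutant by the double centralizer theorem; taking commutants of the first equality then yields $\operatorname{End}(V^{\otimes p})^{\pi(U_q(\mathfrak{gl}(V)))} = \rho(H_p(q))$. The main obstacle is clearly the surjectivity of $\Psi$: the easy inclusion and the double-commutant step are essentially formal, whereas showing that every Hecke-invariant endomorphism arises from a product of $p$ multiplications and $p$ derivations requires careful tracking of how intermediate elements of $H_\infty(q) \setminus H_p(q)$ are eventually absorbed into the $H_p(q)$-coinvariance built into $\hat{T}_p(V)$, with systematic use of the commutation relations of Theorem~\ref{thm:commutation relations 1}.
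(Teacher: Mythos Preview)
Your overall architecture matches the paper's: establish $\pi(U_q(\mathfrak{gl}(V))) \subseteq \operatorname{End}(V^{\otimes p})^{\rho(H_p(q))}$ directly, prove the reverse inclusion via Proposition~\ref{prop:R(v)^kR(v^*)^k}, and deduce the second equality from semisimplicity of $H_p(q)$ together with the double commutant theorem. The easy inclusion and the final step are fine.

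The gap is precisely where you locate it, and your proposed fix does not close it. A ``leading term $E_{IJ}$'' argument cannot work as stated: the elementary matrices $E_{IJ}$ are not in the commutant, so there is no evident partial order on the commutant with respect to which the image of $\Psi$ is triangular; and $\Psi$ has a large kernel, so triangularity on all of $\operatorname{End}(V^{\otimes p})$ is not available either. You would need some independent description of a basis of the commutant against which to run the induction, and none is in sight.

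The paper avoids the surjectivity question altogether by producing an explicit preimage. The key device you are missing is the Euler-type operator
\[
   \mathcal{E} \;=\; \sum_{J} \frac{1}{[J]!}\, R(e_{j_1})\cdots R(e_{j_p})\, R(e^*_{j_p})\cdots R(e^*_{j_1}),
\]
summed over weakly increasing $J=(j_1,\ldots,j_p)$, which is shown (Lemma~\ref{lemma:Euler type operator}) to act as the identity on $V^{\otimes p}$. Given $f$ in the commutant one then writes $f(\varphi)=f(\mathcal{E}\varphi)$, observes that $\sigma_J := R(e^*_{j_1})\cdots R(e^*_{j_p})\varphi$ lands in $\hat{T}_0(V)$ and in fact in $H_p(q)$, commutes $f$ past $\sigma_J$, and reassembles to obtain
\[
   f \;=\; \sum_{J} \frac{1}{[J]!}\, R\bigl(f(e_{j_1}\cdots e_{j_p})\bigr)\, R(e^*_{j_1})\cdots R(e^*_{j_p}),
\]
which lies in $\pi(U_q(\mathfrak{gl}(V)))$ by Proposition~\ref{prop:R(v)^kR(v^*)^k}. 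So the missing idea is not an abstract surjectivity argument for $\Psi$ but a concrete resolution of the identity inside its image. Note also that this is where the hypothesis $[p]!\neq 0$ is used concretely (the denominators $[J]!$), not only through semisimplicity.
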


Here $[k] = [k]_q$ is a $q$-integer, and $[k]! = [k]_q!$ is a $q$-factorial:
$$
   [k] 
   = \frac{q^k-q^{-k}}{q-q^{-1}}
   = q^{k-1} + q^{k-3} + \cdots + q^{-k+1}, \qquad
   [k]! = [k] [k-1] \cdots [1].
$$

To prove this theorem, we consider the following analogue of the Euler operator:
$$
   \mathcal{E} = \sum_{J \in \mathcal{J}} \frac{1}{J!}
   R(e_{j_1}) \cdots R(e_{j_p}) R(e^*_{j_p}) \cdots R(e^*_{j_1}).
$$
Here we put
$$
   \mathcal{J} = \{ (j_1,\ldots,j_p) \in \mathbb{N}^p \,|\, 1 \leq j_1 \leq \cdots \leq j_p \leq n \}.
$$
Moreover we put $J! = [m_1]! \cdots [m_n]!$ for $J = (j_1,\ldots,j_p) \in \mathcal{J}$,
where $m_i$ is the multiplicity of $j_1,\ldots,j_p$ at $i$\textnormal{:}
$$
   (j_1,\ldots,j_p) 
   = (
   \underbrace{1,\ldots,1}_{m_1},
   \underbrace{2,\ldots,2}_{m_2},\ldots,
   \underbrace{n,\ldots,n}_{m_n}).
$$ 
For this $\mathcal{E}$, the following relation holds:

\begin{lemma}\label{lemma:Euler type operator}\sl
   We have
   $\mathcal{E} \varphi = \varphi$ for any $\varphi \in V^{\otimes p}$.
\end{lemma}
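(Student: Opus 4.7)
The plan is to reduce, via linearity and the $H_p(q)$-equivariance of $\mathcal{E}$, to checking $\mathcal{E} e_J = e_J$ only for the sorted basis vectors $e_J$ with $J \in \mathcal{J}$, and then to verify this by a direct iterated computation. The operators $R(e_i)$, $R(e^*_i)$, and $R(\sigma)$ ($\sigma \in H_\infty(q)$) all commute with left multiplication by $H_\infty(q)$ on $\hat{T}(V)$---for the derivations this is the remark after Lemma~3.1---so $\mathcal{E}$ commutes with left multiplication by $H_p(q)$. Since every multi-index $I \in \{1, \ldots, n\}^p$ can be written as $s \cdot J$ for some sorted $J \in \mathcal{J}$ of the same multiset and some $s \in H_p(q)$ (one unsorts $e_J$ step by step via $t_r^{-1}$ on adjacent strictly-increasing pairs), we have $e_I = s \cdot e_J$ in $\hat{T}_p(V)$ and hence $\mathcal{E} e_I = s \cdot \mathcal{E} e_J$. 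Moreover, for sorted $J$, the product $R(e^*_{k_p}) \cdots R(e^*_{k_1}) e_J$ vanishes unless $K$ has the same multiset as $J$, which by the monotone ordering of $\mathcal{J}$ forces $K = J$. Only one summand of $\mathcal{E}$ therefore contributes, and the claim reduces to proving
$$
   R(e_{j_1}) \cdots R(e_{j_p}) R(e^*_{j_p}) \cdots R(e^*_{j_1}) e_J = J! \cdot e_J \qquad \text{in } \hat{T}_p(V).
$$

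Write $J = (1^{m_1}, \ldots, n^{m_n})$ and group the operators as $R(e^*_n)^{m_n} \cdots R(e^*_1)^{m_1}$ on the derivation side and $R(e_1)^{m_1} \cdots R(e_n)^{m_n}$ on the multiplication side. Applying $R(e^*_i)^{m_i}$ iteratively using the formula of Section~3---and using the fact that once $e_1, \ldots, e_{i-1}$ have been stripped, every remaining vector $e_j$ satisfies $j \geq i$, so $g_i(e_j) = t_1 e_j$ uniformly---produces
$$
   R(e^*_n)^{m_n} \cdots R(e^*_1)^{m_1} e_J = C_1 C_2 \cdots C_n \in H_\infty(q),
$$
where, setting $p_i = m_i + \cdots + m_n$, each $C_i$ is an explicit polynomial in the $t_r$'s lying in $H_{p_i}(q) \subset H_p(q)$. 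Applying the multiplications then gives $(C_1 \cdots C_n) \cdot e_n^{m_n} \cdots e_1^{m_1} \in \hat{T}_p(V)$; since $C_1 \cdots C_n$ lies in $H_p(q)$, the defining $H_p(q)$-quotient of $\hat{T}_p(V)$ allows us to push it into the tensor slot, and the remaining task is the identity in $V^{\otimes p}$:
$$
   (C_1 \cdots C_n) \cdot (e_n^{m_n} \cdots e_1^{m_1}) = J! \cdot e_1^{m_1} \cdots e_n^{m_n}.
$$

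For the pure case $J = (i^m)$, the vector $e_i^m$ is a common $q$-eigenvector of every $t_r$, so $C_1$ reduces to the scalar obtained by substituting $t_r \mapsto q$; a direct evaluation of $\sum_{r=1}^k q^{-(r-1)} q^{k-r} = [k]$ yields $C_1 \cdot e_i^m = [m]!\, e_i^m$. For the mixed case the action of each $C_i$ splits naturally: the ``within-block'' part on the $e_i$-segment of the reversed tensor contributes $[m_i]!$ exactly as in the pure case, while the ``cross-block'' part rearranges adjacent pairs $e_{i'} e_i$ with $i' > i$ into $e_i e_{i'}$ via the clean Jimbo swap $t \cdot e_{i'} e_i = e_i e_{i'}$, which carries no additional scalar. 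Collected together as $i$ descends from $n$ to $1$, these actions sort the reversed product into $e_J$ and accumulate exactly the factor $\prod_i [m_i]! = J!$.

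The delicate point is this last block-by-block bookkeeping: each summand of $C_1 \cdots C_n$ is a product of the operators $t_r^{(c)}$, and one must verify that every such product, applied to the appropriate slice of $e_n^{m_n} \cdots e_1^{m_1}$, contributes only a ``within-block'' $q$-weight building up to $[m_i]!$ together with ``cross-block'' clean swaps. This is most cleanly handled by induction on the number of distinct values appearing in $J$, with the inductive step extracting a new value from an otherwise pure block and using the commutation relations of Theorem~\ref{thm:commutation relations 1} to reduce to the already-verified cases.
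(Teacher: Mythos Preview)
Your outline is sound and would eventually succeed, but it diverges from the paper's argument and leaves the hardest step essentially undone.

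The paper never reduces to sorted $J$, never computes any Hecke element $C_i$, and never has to sort $e_n^{m_n}\cdots e_1^{m_1}$ back into $e_J$. Instead it observes, via Theorems~\ref{thm:commutation relations 1} and~\ref{thm:commutation relations 2}, that
\[
   R(e_i)R(e^*_i)=\Phi_i(0),\qquad
   \Phi_i(a)R(e^*_i)=R(e^*_i)\Phi_i(a-1),\qquad
   \Phi_i(a)=\frac{q^aK_i-q^{-a}K_i^{-1}}{q-q^{-1}},
\]
so that $R(e_i)^mR(e^*_i)^m=\Phi_i^{(m)}=\Phi_i(0)\Phi_i(-1)\cdots\Phi_i(-m+1)$ is a polynomial in $K_i$ alone. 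Since $K_i$ commutes with $R(e_j),R(e^*_j)$ for $j\neq i$, the nested product $R(e_{j_1})\cdots R(e_{j_p})R(e^*_{j_p})\cdots R(e^*_{j_1})$ telescopes to $\Phi_1^{(m_1)}\cdots\Phi_n^{(m_n)}$. This operator is diagonal on \emph{every} basis vector $e_{i_1}\cdots e_{i_p}$ with eigenvalue $[l_1]^{(m_1)}\cdots[l_n]^{(m_n)}$, where $l_i$ is the multiplicity of $i$ in $(i_1,\dots,i_p)$; the falling $q$-factorial vanishes unless $l_i\geq m_i$ for all $i$, forcing $(l_1,\dots,l_n)=(m_1,\dots,m_n)$, and then equals $J!$. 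Summing over $J\in\mathcal{J}$ immediately gives $\mathcal{E}\varphi=\varphi$.

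What you do differently: you push all the derivations through first, landing in $\hat T_0(V)=H_\infty(q)$, and then try to identify the resulting Hecke element's action on the reversed tensor. Your pure-block computation $\sum_{r=1}^k q^{-(r-1)}q^{k-r}=[k]$ is exactly the eigenvalue of $\Phi_i(0)$ on an $e_i^k$-block and is correct. But the mixed case --- your ``delicate point'' --- is where all the content lies, and you only gesture at an induction using Theorem~\ref{thm:commutation relations 1}. That induction can be made to work, but as written it is a gap: you have not said which commutation relation moves which $C_i$ past which block, nor why no $(q-q^{-1})$ correction terms survive. The paper's $\Phi_i^{(m)}$ trick is precisely the observation that collapses this bookkeeping: it replaces the Hecke-algebra combinatorics by a single commuting family of diagonal operators.
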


\begin{proof}
We put 
$$
   \Phi_i(a)= \frac{q^aK_i - q^{-a}K_i^{-1}}{q - q^{-1}},
$$
and moreover
$$
   \Phi_i^{(m)} 
   = \Phi_i(0) \Phi_i(-1) \cdots \Phi_i(-m+2) \Phi_i(-m+1).
$$
Then we have $R(e_i)R(e^*_i) = \Phi_i(0)$
and $\Phi_i(a)R(e^*_i) =  R(e^*_i)\Phi_i(a-1)$,
so that
$$
   R(e_i)^m R(e^*_i)^m 
   = \Phi_i^{(m)}.
$$
Thus, for $1 \leq j_1 \leq \cdots \leq j_p \leq n$, we have
$$
   R(e_{j_1}) \cdots R(e_{j_p}) R(e^*_{j_p}) \cdots R(e^*_{j_1}) 
   = \Phi_{j_1}^{(m_1)}
   \Phi_{j_2}^{(m_2)}
   \cdots
   \Phi_{j_{p-1}}^{(m_{p-1})}
   \Phi_{j_p}^{(m_p)},
$$
where $m_i$ is the multiplicity of $j_1,\ldots,j_p$ at $i$.

Moreover, we consider 
$1 \leq i_1,\ldots,i_p \leq n$,
and let $l_i$ be the multiplicity of $i_1,\ldots,i_p$ at $i$.
Then we have
$$
   \Phi_i(a) e_{i_1} \cdots e_{i_p}
   = [l_i + a] e_{i_1} \cdots e_{i_p}.
$$
Hence, we have
\begin{align*}
   &R(e_{j_1}) \cdots R(e_{j_p}) R(e^*_{j_p}) \cdots R(e^*_{j_1}) e_{i_1} \cdots e_{i_p} \\
   & \qquad
   = [l_1]^{(m_1)} \cdots [l_n]^{(m_n)} e_{i_1} \cdots e_{i_p}\\
   & \qquad
   = \begin{cases}
   [m_1]! \cdots [m_n]! e_{i_1} \cdots e_{i_p}, &
   (l_1,\ldots,l_n) = (m_1,\ldots,m_n), \\
   0, &
   (l_1,\ldots,l_n) \ne (m_1,\ldots,m_n).
   \end{cases}
\end{align*}
Here we put $[l]^{(m)} = [l][l-1]\cdots[l-m+1]$.
The assertion is immediate from this.
\end{proof}

Using this lemma, we can prove Theorem~\ref{thm:q-Schur--Weyl duality} as follows:

\begin{proof}[Proof of Theorem~\textsl{\ref{thm:q-Schur--Weyl duality}}]
We can check by a direct calculation that these two actions are commutative.
When $[p]! \ne 0$, the algebra $H_p(q)$ is semisimple (\cite{GU}).
Thus, by the double commutant theorem (\cite{GW}), 
it suffices to show 
$\operatorname{End}(V^{\otimes p})^{\rho(H_p(q))} \subset \pi(U_q(\mathfrak{gl}(V)))$.

Assume that $f \in\operatorname{End}(V^{\otimes p})^{\rho(H_p(q))}$.
Then, for any $\varphi \in V^{\otimes p}$, we have
\begin{align*}
   f(\varphi)
   & = f(\mathcal{E}\varphi) \allowdisplaybreaks\\
   & = f(\sum_{J = (j_1,\ldots,j_p) \in \mathcal{J}} \frac{1}{[J]!}
   R(e_{j_p}) \cdots R(e_{j_1}) R(e^*_{j_1}) \cdots R(e^*_{j_p}) \varphi) \allowdisplaybreaks\\
   & = f(\sum_{J = (j_1,\ldots,j_p) \in \mathcal{J}} \frac{1}{[J]!}
   R(e_{j_p}) \cdots R(e_{j_1}) \sigma_J) \allowdisplaybreaks\\
   & = f(\sum_{J = (j_1,\ldots,j_p) \in \mathcal{J}} \frac{1}{[J]!}
   \sigma_J e_{j_1} \cdots e_{j_p}) \allowdisplaybreaks\\
   & = \sum_{J = (j_1,\ldots,j_p) \in \mathcal{J}} \frac{1}{[J]!}
   \sigma_J f(e_{j_1} \cdots e_{j_p}) \allowdisplaybreaks\\
   & = \sum_{J = (j_1,\ldots,j_p) \in \mathcal{J}} \frac{1}{[J]!}
   R(f(e_{j_1} \cdots e_{j_p}))  R(e^*_{j_1}) \cdots R(e^*_{j_p}) \varphi.   
\end{align*}
Here we denote $R(e^*_{j_1}) \cdots R(e^*_{j_p}) \varphi$ simply by $\sigma_J$.
This $\sigma_J$ is an element of $H_p(q)$,
and $f$ commutes with the action of $H_p(q)$,
so that the fifth equality holds.

Thus, by Proposition~\ref{prop:R(v)^kR(v^*)^k}, 
we see that $f \in \pi(U_q(\mathfrak{gl}(V)))$.
\end{proof}

\begin{remark}
   For any group $G$, every map $f \colon G \to G$ commuting 
   with all right translations is equal to a left translation.
   This fact is proved quickly as follows.
   Let $e$ be the identity element of $G$.
   For any element $x$ of $G$, we have $f(x) = f(ex) = f(e)x$,
   because $f$ commutes with the right multiplication by $x$.
   Thus $f$ is equal to the left multiplication by $f(e)$, as we claimed.
   It should be noted that
   our proof of Theorem~\ref{thm:q-Schur--Weyl duality}
   is based on the same principle
   (the operator $\mathcal{E}$ plays a role of the identity element $e$).
\end{remark}

Theorem~\ref{thm:q-Schur--Weyl duality} holds, if and only if $q$ satisfy $[p]! \ne 0$
(this condition is also equivalent with the condition that $H_p(q)$ is semisimple).
Indeed, when $[p]! = 0$, this proof fails because there exists $I$ such that $[I]! = 0$.
It is interesting that the condition $[p]! = 0$ appears this way.

I hope that the algebra $\hat{T}(V)$ and the differential operators on $\hat{T}(V)$
will be useful to study invariant theory in quantum enveloping algebras.

%
%
%

\end{document}